\newtheorem{thm}{Theorem}
\newtheorem{lem}[thm]{Lemma}
\newtheorem{conj}[thm]{Conjecture}
\begin{document}

\title{An elementary conjecture which implies the Goldbach conjecture}
\author{Richard Williamson\thanks{Email: \href{mailto:richard@rwilliamson-mathematics.info}{richard@rwilliamson-mathematics.info}}}
\date{\today}

\maketitle

\begin{abstract} Let $p_{1}$, \ldots, $p_{k}$ be the first $k$ odd primes in succession. Let $n$ be an even integer such that $n > p_{k}$. We conjecture that if none of $n - p_{1}$, \ldots, $n - p_{k}$ are prime, then at least one of them has a prime factor which is greater than or equal to $p_{k}$. In this brief note, we observe that Goldbach's conjecture follows from this conjecture. \end{abstract}

Questions concerning prime factors of a collection of composite integers in a certain range can be hard to answer. A notable example, discussed for instance in \cite{ErdosPomeranceAnAnalogueOfGrimmsProblemOfFindingDistinctPrimeFactorsOfConsecutiveIntegers}, is Grimm's conjecture that given $k$ consecutive composite integers $n_{1}$, \ldots, $n_{k}$, one can find $k$ distinct primes $q_{1}$, \ldots, $q_{k}$ such that, for each $1 \leq i \leq k$, $n_{i}$ is divisible by $q_{i}$.

The purpose of this note is to bring attention to the following.

\begin{conj} \label{Conjecture} Let $p_{1}$, \ldots, $p_{k}$ denote the first $k$ odd primes in succession. Let $n$ be an even integer such that $n > p_{k}$. Suppose that for all $1 \leq i \leq k$, $n - p_{i}$ is not prime. Then there is an integer $i$ with $1 \leq i \leq k$ for which $n - p_{i}$ has a prime factor $p$ such that $p \geq p_{k}$. \end{conj}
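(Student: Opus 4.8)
Since the statement is presented as a conjecture, and since the note proceeds to deduce Goldbach's conjecture from it, I do not expect an unconditional proof to be attainable; any complete argument would in particular settle Goldbach. The sensible first step is therefore to isolate the combinatorial content by passing to the contrapositive. Keeping the standing hypotheses that $n$ is even and $n > p_{k}$, the desired conclusion fails exactly when none of $n - p_{1}, \ldots, n - p_{k}$ has a prime factor as large as $p_{k}$; together with the assumption that none of them is prime, this means that each $n - p_{i}$ is an odd number, not prime, all of whose prime factors lie in $\{p_{1}, \ldots, p_{k-1}\}$ (the prime $2$ being excluded because $n - p_{i}$ is odd). So the task reduces to showing that there is no even $n > p_{k}$ for which all $k$ of the shifts $n - p_{1}, \ldots, n - p_{k}$ are simultaneously odd, non-prime, and supported on the primes $p_{1}, \ldots, p_{k-1}$.

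The plan is to press on this smooth configuration from two sides. The first is a pigeonhole, or covering-system, observation: apart from the degenerate case $n - p_{i} = 1$, each shift must be divisible by at least one of the $k-1$ odd primes below $p_{k}$, so some prime $p_{\ell}$ with $\ell < k$ divides two distinct shifts $n - p_{i}$ and $n - p_{j}$; subtracting gives $p_{\ell} \mid (p_{j} - p_{i})$, a genuine constraint since $0 < |p_{j} - p_{i}| < p_{k}$. I would try to amplify this by counting, for each fixed $p_{\ell}$, how many indices $i$ satisfy $p_{i} \equiv n$ modulo $p_{\ell}$, and then searching for a global incompatibility among the congruences $n \equiv p_{i}$ modulo $p_{\ell}$ as $\ell$ and $i$ range. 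The second side is a density estimate: numbers all of whose prime factors lie below $p_{k}$ are sparse, so the heuristic count of $n$ for which all $k$ shifts are smooth is negligible, which lends average support to the conjecture even though it certifies no individual $n$.

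The main obstacle is precisely the gulf between these two lines. The congruence constraints from pigeonhole are real but far too weak on their own, since a condition such as $n \equiv p_{i}$ modulo $p_{\ell}$ is easily met, and eliminating every simultaneous smooth configuration uniformly in $n$ is exactly the additive-multiplicative interaction at the heart of Goldbach. The density argument, on the other hand, is only an average and cannot rule out any specific exceptional $n$. Turning \emph{smooth shifts are rare} into \emph{smooth shifts never occupy all $k$ slots} is where the difficulty concentrates, and I expect it to be beyond elementary reach, in keeping with the note's thesis that this statement lives at the level of the Goldbach conjecture.
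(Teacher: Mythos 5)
The statement you were asked to prove is the paper's central \emph{conjecture}: the paper itself offers no proof of it, presents it explicitly as open, and its only actual results are that Conjecture \ref{Conjecture} implies Goldbach's conjecture (Lemma \ref{Lemma} and Theorem \ref{Theorem}, via Bertrand's postulate and an induction over the finitely many primes below $n$). So there is no proof in the paper to compare yours against, and your opening instinct --- that an unconditional proof is not to be expected, since it would settle Goldbach --- is exactly right; the author likewise reports that elementary inductive attempts leave a gap and expresses hope that sieve-theoretic methods might succeed.

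To be explicit about what your submission establishes: nothing is proved, and you do not claim otherwise. Your contrapositive reformulation is correct and is the natural way to frame the problem --- failure of the conclusion means every $n - p_{i}$ is odd, not prime, and has all prime factors below $p_{k}$, i.e.\ is $p_{k-1}$-smooth --- and your two lines of attack (pigeonhole congruences $p_{\ell} \mid p_{j} - p_{i}$ with $0 < |p_{j}-p_{i}| < p_{k}$, versus the sparsity of smooth numbers) are the obvious first moves. As you yourself note, neither closes: the congruence constraints are satisfiable for far too many $n$, and density bounds control averages rather than individual even numbers. One concrete point worth flagging from your own observation about the case $n - p_{i} = 1$: for $k = 1$ and $n = 4$ one has $n > p_{1} = 3$, the single shift $n - p_{1} = 1$ is not prime, and $1$ has no prime factor at all, so the conjecture as literally stated admits this trivial counterexample and implicitly needs $n - p_{i} > 1$ (or $n \geq 6$, as in Theorem \ref{Theorem}). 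If you pursue the problem, that hypothesis should be made explicit, and the author's remarks about characterising when the large prime factor can equal $p_{k}$ (the cases $n = 3^{r}+3$ for $k=1$ and $n=30$ for $k=2$) are the place where an inductive elementary attack currently gets stuck.
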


Computer calculations support that Conjecture \ref{Conjecture} holds. Indeed, the first $i$ for which it holds is, in all cases which have been considered, much smaller than $k$, except of course when $k$ itself is very small.

It seems that the case in which $p$ is equal to $p_{k}$ can occur only in very special circumstances. It occurs when $k=1$ for any $n$ of the form $3^{r} + 3$ for some $r > 1$. It also occurs when $k=2$ and $n=30$. No other examples are known to the author.

Characterising exactly when $p$ can be equal to $p_{k}$ would be interesting. One reason for this will be raised later. Another is that if one wishes to prove Conjecture \ref{Conjecture} inductively, at least if one uses only elementary techniques, a precise characterisation of when $p$ can be equal to $p_{k}$ seems crucial.

Our reason for bringing Conjecture \ref{Conjecture} to light is that one can deduce from it, by a straightforward argument, that Goldbach's conjecture holds. We give the proof below. The author is not aware of an attempt having previously been made to approach the Goldbach conjecture in this way. If that is so, it would be remarkable that such a simple argument has hitherto been overlooked.

The deduction of Goldbach's conjecture from Conjecture \ref{Conjecture} relies on the following.

\begin{lem}[Assuming Conjecture \ref{Conjecture}] \label{Lemma} Let $n$ and $p_{1}$, $\ldots$, $p_{k}$ be as in the statement of Conjecture \ref{Conjecture}. Suppose that for all $1 \leq i \leq k$, $n - p_{i}$ is not prime. Then there is a prime $p$ such that $p_{k} < p < n$. \end{lem}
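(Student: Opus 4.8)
The plan is to feed the hypotheses straight into Conjecture~\ref{Conjecture} and then to convert the divisibility information it returns into a lower bound on $n$, from which Bertrand's postulate produces the required prime.

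First I would apply Conjecture~\ref{Conjecture}. Since $n$ together with $p_1, \ldots, p_k$ satisfies its hypotheses, and since by assumption none of $n - p_1, \ldots, n - p_k$ is prime, the conjecture supplies an index $i$ with $1 \le i \le k$ and a prime $p$ with $p \ge p_k$ such that $p$ divides $n - p_i$. Observe that $n - p_i > 0$, because $n > p_k \ge p_i$; thus $n - p_i$ is a positive integer admitting $p$ as a prime factor.

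The key step is to note that the mere existence of such a large prime factor forces $n$ to be large. Because $n - p_i$ is divisible by $p$ but is not prime, we cannot have $n - p_i = p$; writing $n - p_i = pm$ with $m$ a positive integer, we therefore have $m \ge 2$, whence $n - p_i \ge 2p \ge 2p_k$. Consequently $n = p_i + (n - p_i) \ge p_i + 2p_k > 2p_k$. I would then invoke Bertrand's postulate, applied to the integer $p_k \ge 3$: it yields a prime $q$ with $p_k < q < 2p_k$. Combining this with $2p_k < n$ gives $p_k < q < n$, which is the required prime.

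The main obstacle is that Conjecture~\ref{Conjecture} guarantees only $p \ge p_k$, not $p > p_k$. When $p > p_k$ one could take the witness to be $p$ itself, since $p \le n - p_i < n$; but the borderline case $p = p_k$ genuinely occurs --- for example when $k = 1$ and $n = 3^r + 3$, where $n - p_1 = 3^r$ has $3 = p_k$ as its only prime factor --- and there $p$ is not a prime lying strictly between $p_k$ and $n$. The device above circumvents this uniformly: instead of using $p$ as the witness it uses only the \emph{size} of the composite number $n - p_i$ to force $n > 2p_k$, after which Bertrand's postulate manufactures a prime in the interval $(p_k, 2p_k) \subseteq (p_k, n)$ whether or not $p$ equals $p_k$. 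The one point I would check with care is that $n - p_i$ is a proper multiple of $p$, that is, that $m \ge 2$; this is exactly where the hypothesis that $n - p_i$ is not prime enters.
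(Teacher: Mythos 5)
Your proof is correct and follows essentially the same route as the paper: apply Conjecture~\ref{Conjecture} to obtain a prime factor $p \geq p_{k}$ of some composite $n - p_{i}$, deduce that $n$ is large, and invoke Bertrand's postulate. The only difference is organizational: the paper splits into the cases $p > p_{k}$ (where $p$ itself serves as the witness, with no appeal to Bertrand) and $p = p_{k}$ (where Bertrand supplies $p_{k+1} < 2p_{k} < n$), whereas you treat both uniformly via the bound $n - p_{i} \geq 2p \geq 2p_{k}$ --- a slight streamlining of the same argument.
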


\begin{proof} By Conjecture \ref{Conjecture}, there is an integer $i$ with $1 \leq i \leq k$ for which $n - p_{i}$ has a prime factor $q$ such that $q \geq p_{k}$. In particular, we have that $q < n - p_{i} < n$. If $q > p_{k}$, we thus can take $p$ to be $q$.

Suppose that $q = p_{k}$. Since $n - p_{i}$ is not prime, we have that $n - p_{i} = mp_{k}$ for some odd integer $m \geq 3$. Thus we have that $3p_{k} \leq mp_{k} = n - p_{i} < n$. Now, by Bertrand's postulate, we have that $p_{k+1} < 2p_{k}$. Together, these inequalities demonstrate that $p_{k+1} < n$. Since $p_{k} < p_{k+1}$, we thus can take $p$ to be $p_{k+1}$.

\end{proof}

From Lemma \ref{Lemma} we deduce the following, which is Goldbach's conjecture.

\begin{thm}[Assuming Conjecture \ref{Conjecture}] \label{Theorem} Let $n \geq 6$ be an even integer. Then there is a prime $p < n$ such that $n-p$ is prime. \end{thm}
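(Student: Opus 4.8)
The plan is to argue by contradiction, with the crucial step being a judicious choice of $k$ that turns the conclusion of Lemma~\ref{Lemma} into an impossibility. I would suppose that the theorem fails for some even $n \geq 6$; that is, suppose that for every prime $p < n$, the integer $n - p$ is not prime. I would then take $p_{k}$ to be the \emph{largest} odd prime less than $n$, so that $p_{1}, \ldots, p_{k}$ are precisely the odd primes below $n$, listed in succession. Since $n \geq 6$, such odd primes certainly exist (for instance $3$ and $5$), so $k$ is well defined and $n > p_{k}$.

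Next I would verify that the hypotheses of Lemma~\ref{Lemma} are met. Each $p_{i}$ is a prime with $p_{i} < n$, so by the assumption that the theorem fails, $n - p_{i}$ is not prime for every $1 \leq i \leq k$. Thus $n$ and $p_{1}, \ldots, p_{k}$ satisfy the hypotheses of the Lemma, and I may conclude that there is a prime $p$ with $p_{k} < p < n$.

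The contradiction then comes from the maximality built into the choice of $p_{k}$. Because $n \geq 6$, the largest odd prime below $n$ is at least $5$, and hence exceeds the only even prime $2$; consequently $p_{k}$ is in fact the largest prime of any kind below $n$. There is therefore no prime strictly between $p_{k}$ and $n$, which directly contradicts the prime $p$ produced by the Lemma. This contradiction shows that the theorem cannot fail, establishing Goldbach's conjecture for all even $n \geq 6$.

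I expect the argument to be short, so the main subtlety is not any hard estimate but rather the bookkeeping at the two ends: ensuring that odd primes below $n$ exist so that $k$ is well defined, and observing that for $n \geq 6$ the largest odd prime below $n$ coincides with the largest prime below $n$, so that the prime $2$ cannot sneak into the interval $(p_{k}, n)$ and spoil the contradiction.
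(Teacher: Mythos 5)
Your proof is correct, and it is essentially the paper's argument: the paper applies Lemma~\ref{Lemma} inductively, growing $k$ until the finiteness of primes below $n$ forces some $n-p_{i}$ to be prime, while you unwind that induction into a single application of the Lemma at the extremal choice of $k$ (the largest odd prime below $n$) and derive the same contradiction. The two formulations are interchangeable, and your bookkeeping at the endpoints is sound.
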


\begin{proof} Since there are only finitely many primes $q$ such that $q < n$, this follows immediately, by induction, from Lemma \ref{Lemma}.  \end{proof}

It seems likely that the use of Bertrand's postulate in the proof Lemma \ref{Lemma} is not fundamental. Given a sharper characterisation of when the $p$ of Lemma \ref{Lemma} can be equal to $p_{k}$, as discussed above, it should be possible to avoid it.

How difficult is Conjecture \ref{Conjecture}? This remains to be seen! Attempting to prove it inductively, using only elementary techniques, one can come close, but a gap manifests itself in the various approaches the author has taken. It is unclear whether this gap is fundamental, or whether it can be bridged.

It may well be that techniques from sieve theory, or other estimative methods, can make short work of Conjecture \ref{Conjecture}. We hope that experts on such techniques will, upon reading this note, give such an attempt a try.

The author is extremely grateful to several people for checking the above arguments, and for providing very useful feedback.

\bibliography{ref}
\bibliographystyle{siam}

\end{document}